\newtheorem{theorem}{Theorem}[section] 
\newtheorem{lemma}[theorem]{Lemma}     
\def\dA{d_{\mathrm A}}
\def\d{{\rm d}}
\def\R{{\mathbb R}}
\def\N{{\mathbb N}}
\def\B{{\mathscr B}}
\def\<{\left\langle}
\def\>{\right\rangle}
\def\qqand{\qquad\mbox{and}\qquad}
\def\qand{\qand\mbox{and}\quad}
\def\tfrac#1#2{{\textstyle\frac{#1}{#2}}}
\def\be#1{\begin{equation}\label{#1}}
\def\ee{\end{equation}}
\title[Log-Lipschitz embeddings]%
 {Log-Lipschitz embeddings of homogeneous sets with sharp logarithmic exponents and slicing the unit cube} 
\author{James C.\ Robinson}
\begin{document}
\maketitle

\begin{abstract}
If $X$ is a subset of a Banach space with $X-X$
homogeneous, then $X$ can be embedded into some $\R^n$ (with $n$ sufficiently large) using a linear
map $L$ whose inverse is Lipschitz to within logarithmic corrections.
More precisely,
$$
c\,\frac{\|x-y\|}{|\,\log\|x-y\|\,|^\alpha}\le|Lx-Ly|\le c\|x-y\|
$$
for all $x,y\in X$ with $\|x-y\|<\delta$ for some $\delta$ sufficiently small.
A simple argument shows that one must have $\alpha>1$ in the case of a general Banach space and $\alpha>1/2$ in the case of a Hilbert space. It is
shown in this paper that these exponents can be achieved.

While the argument in a general Banach space is relatively straightforward, the Hilbert space case relies on a result due to Ball ({\em Proc. Amer. Math. Soc.} 97 (1986) 465--473) which guarantees that the maximum volume of hyperplane slices of the unit cube in $\R^d$ is $\sqrt2$, in dependent of $d$.
\end{abstract}

\section{Introduction} 
\label{intro}

\noindent The abstract results in this paper are motivated by a problem from the theory of infinite-dimensional dynamical systems, but are relevant to the theory of convex bodies and have consequences for the bi-Lipschitz embedding problem for compact metric spaces.

 Suppose that $X$ is a finite-dimensional subset of a Banach space $\B$ that is invariant under the (semi-)flow generated by the differential equation
$$
\dot u={\mathcal G}(u),\qquad u\in\B
$$
(many partial differential equations can be recast in this form by identifying an appropriate phase space in which the solutions evolve, see, for example, Temam, 1988, or Robinson, 2001). It is natural to ask whether one can construct a finite-dimensional set of ordinary differential equations that reproduce the dynamics on the set $X$ (cf.\ Eden et al., 1994; Robinson, 1999; Romanov, 2000).

Suppose for simplicity that $\mathcal G$ is Lipschitz from $X$ into itself (this is an unrealistic assumption in general, but one can obtain information about the smoothness of $\mathcal G$ in certain particular cases, see Pinto de Moura \& Robinson, 2010b, for example). A `straightforward' way to try to construct such a finite-dimensional system is to find an embedding of $X$ into some $\R^N$, i.e.\ a mapping $L:\B\rightarrow\R^N$ that is one-to-one between $X$ and its image. In this case, the vector field on $LX$ that reproduces the dynamics on $X$ is
$$\hat g(x)=[L^{-1}\circ{\mathcal G}\circ L](x),$$
and this can be extended to a vector field $g$ defined on the whole of $\R^N$ using any extension result that preserves the modulus of continuity (e.g.\ McShane, 1934; Stein, 1970; Wells \& Williams, 1975). However, it remains to guarantee that the solutions of the finite-dimensional system $\dot x=g(x)$ are unique; in general this is assured provided that $g$ is $1$-log-Lipschitz, i.e.\ there exist $c>0$ and $\delta>0$ such that
$$
|g(x)-g(y)|\le c|x-y|\bigl|\log|x-y|\bigr|\qquad\mbox{for all}\qquad x,y\ \mbox{with}\ |x-y|<\delta
$$
(that this is sufficient for uniqueness follows from Osgood's criterion, $\int_0^1 \omega^{-1}(s)\,\d s=\infty$, where $\omega$ is the modulus of continuity of $g$; see Hartman, 1964, for example).

If one takes $L$ to be linear, the modulus of continuity of $g$ is determined by the modulus of continuity of $L^{-1}$. So the reproduction of the `finite-dimensional dynamics' on $X$ within a finite-dimensional system of ODEs that have unique solutions relies on finding an embedding of $X$ into $\R^N$ whose inverse is log-Lipschitz with a sufficiently small logarithmic exponent.

Study of the regularity of embeddings of general finite-dimensional sets into Euclidean spaces (where `regularity' means regularity of the inverse mapping) began with a result for subsets of Euclidean spaces with finite box-counting dimension, due to Ben-Artzi et al.\ (1993), showing that in this case $P^{-1}$ is H\"older continuous for `most' orthogonal projections $P$ onto a space of sufficiently high dimension; Foias \& Olson (1996) then showed that the same is true for subsets of infinite-dimensional Hilbert spaces (with finite box-counting dimension), and Hunt \& Kaloshin (1999) gave a sharp bound on this H\"older exponent, treating linear maps rather than projections and providing the essential steps for extending the argument to subsets of Banach spaces\footnote{Some additional work is in fact required for their techniques to be applicable in the Banach space case, see Robinson (2009).}. However, the fact that in general one can do no better than H\"older continuous for $L^{-1}$ means that in the context of the `dimension reduction' programme discussed above, the result will be a H\"older continuous ordinary differential equation on $\R^N$, for which no uniqueness can be guaranteed.

It is therefore natural to turn to other more restrictive definitions of dimension, to see if these can be exploited to improve the modulus of continuity of $L^{-1}$. This paper concentrates on the Assouad dimension, which was introduced in the context of metric spaces
(Assouad, 1983; see also Bouligand, 1928), and has been used extensively in the search for conditions under which an arbitrary metric space admits a bi-Lipschitz embedding into some $\R^k$ (see Heinonen, 2003, for more on this problem, and Luukaainen, 1998, or Olson, 2002, for more on the Assouad dimension).

This dimension is most naturally
defined as a concept auxiliary to the notion of a homogeneous
space:

\begin{definition}
A metric space $(X,d)$ is said to be $(M,s)$-{\it homogeneous\/} (or
simply {\it homogeneous\/}) if any ball of radius~$r$ can be covered
by at most $M(r/\rho)^s$ smaller balls of radius~$\rho$. The \emph{Assouad dimension} of $X$, $\dA(X)$, is the infimum of all
$s$ such that $(X,d)$ is $(M,s)$-homogeneous for some $M\ge 1$.
\end{definition}

Since any subset of $\R^N$ is homogeneous and homogeneity is
preserved under bi-Lipschitz mappings, it follows that $(X,d)$ must
be homogeneous if it is to admit a bi-Lipschitz embedding into some
$\R^N$. However, the example of the Heisenberg group with the Carnot-Carath\'eodory metric shows that homogeneity is not sufficient to guarantee a bi-Lipschitz embedding (see Semmes, 1996; there are other counterexamples due to Laakso, 2002).

However, Olson (2002) showed that if $X$ is a subset of $\R^N$ with $\dA(X-X)<s$, there is an `almost bi-Lipschitz' embedding of $X$ into $\R^k$, i.e.\ an embedding that is linear and has a log-Lipschitz inverse; here,
$$
X-X=\{x-y:\ x,y\in X\}
$$
(note that $X-X$ contains a (perhaps translated) copy of $X$, so clearly $\dA(X-X)\ge\dA(X)$). This result was extended by Olson \& Robinson (2010) for subsets of real Hilbert spaces, and their argument was subsequently adapted by Robinson (2009) to treat subsets of real Banach spaces, yielding the following result which forms the focus of this paper. The notion of `prevalence', used in the theorem, is briefly recalled in Section \ref{prev}.

\begin{theorem}\label{thm:dA}
Let $X$ be a compact subset of a Banach space $\B$ with $d_{\rm
A}(X-X)<d<N$, where $N\in\N$.  If
\begin{equation}\label{eq:logexp}
\gamma>\frac{\alpha N+1}{N-d},
\end{equation}
where $\alpha=2$ if $\B$ is a Banach space and $\alpha=\tfrac{3}{2}$ if $\B$ is in fact a Hilbert space,
then a prevalent set of linear maps $L:\B\rightarrow\R^N$ is $\gamma$-almost bi-Lipschitz: for some constants $c_L>0$, $\rho_L>0$,
\begin{equation}\label{gammabi}
\frac{1}{c_L}\,\frac{\|x-y\|}{|\,\log\|x-y\|\,|^\gamma} \le
\|Lx-Ly\|\le c_L\|x-y\|
\end{equation}
for all $x,y\in X$ with $\|x-y\|\le\rho_L$.
\end{theorem}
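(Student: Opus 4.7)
The plan is to establish \eqref{gammabi} via the prevalence framework: exhibit a finite-dimensional \emph{probe space} $V$ of linear maps $\B\to\R^N$ together with a probability measure $\mu_V$ (normalised Lebesgue on a cube in $V$) such that, for every fixed $L_0\in L(\B,\R^N)$, the set of $L'\in V$ for which $L_0+L'$ fails \eqref{gammabi} has $\mu_V$-measure zero. Following the Hunt--Kaloshin route and its adaptations in Olson \& Robinson (2010) and Robinson (2009), $V$ is built from linear maps factoring through an $n$-dimensional subspace $E_n\subset\B$ that approximates $X-X$ down to a small parameter $\eta$: the $(M,d)$-homogeneity of $X-X$ furnishes an $\eta$-net of cardinality $\lesssim\eta^{-d}$, whose span gives $E_n$ with $n\lesssim\eta^{-d}$.

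The technical heart is a \emph{small-ball estimate}: for each fixed $z\in\B$ of norm $r$ and each $\delta>0$,
$$
\mu_V\bigl(\{L':\|(L_0+L')z\|\le\delta\}\bigr)\ \le\ C^{N}\,n^{\beta N}\,\omega_N\,(\delta/r)^N,
$$
with $\beta=0$ in a Hilbert space and $\beta=\tfrac{1}{2}$ in a general Banach space. The derivation is direct: each of the $N$ coordinates of $L'z$ is a one-dimensional linear functional applied to a vector drawn uniformly from $[-1,1]^n$, whose density at a given value is exactly the codimension-one area of the corresponding hyperplane slice of the cube. In the Hilbert case, choosing an orthonormal basis of $E_n$ for the probe parametrisation and invoking Ball's slicing theorem gives a density bound of $\sqrt 2/\|z\|$ per coordinate \emph{independent of $n$}, so multiplying over the $N$ independent rows yields the dimension-free joint bound $2^{N/2}/\|z\|^N$. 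In a general Banach space the basis of $E_n$ cannot be made orthonormal with respect to $\|\cdot\|_\B$, and the cleanest available per-coordinate slice bound is weaker by a factor of $\sqrt n$.

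To finish, set $r_k=2^{-k}$ and $\delta_k=r_k/k^\gamma$, cover the annulus $\{z\in X-X:r_{k+1}\le\|z\|\le r_k\}$ by at most $M(r_k/\rho_k)^d$ balls of radius $\rho_k\asymp\delta_k$ (using the $d$-dimensional Assouad homogeneity), and use a deterministic operator-norm bound on $L_0+L'$ over the probe to promote the small-ball estimate from ball-centres to the whole ball. Union-bounding the failures at scale $r_k$ and balancing the probe scale $\eta_k$ against the approximation error gives
$$
\mu_V(B_k)\ \lesssim\ k^{\gamma d}\,n_k^{\beta N}\,C^N\,k^{-\gamma N},
$$
and the Borel--Cantelli lemma yields \eqref{gammabi} on a prevalent set precisely when $\gamma$ satisfies \eqref{eq:logexp}: the $\alpha N$ in the numerator tracks the combination of the $n_k^{\beta N}$ probe-dimension cost with the $k^{\gamma d}$ covering count and the $\omega_N$ ball-volume factor.

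The main obstacle, and the point where Ball's theorem is essential, is the Hilbert-space small-ball estimate: the whole gain from $\alpha=2$ to $\alpha=\tfrac 32$ comes from eliminating the $n^{N/2}$ factor through the dimension-free slicing constant $\sqrt 2$, which in turn forces the probe to be parametrised with respect to an orthonormal basis of $E_n$ and Ball's inequality to be applied coordinatewise across all $N$ rows. Everything else — the homogeneity-driven construction of $E_n$, the dyadic cover, the operator-norm extension from centres to balls, and the Borel--Cantelli bookkeeping — is by now standard and follows the templates of Olson (2002) and Olson \& Robinson (2010).
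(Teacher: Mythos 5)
The central gap is in the construction of the probe. You build a single finite-dimensional space $E_n$ from the span of an $\eta$-net of $X-X$, so $n\lesssim\eta^{-d}$; at the dyadic scale $\eta_k=2^{-k}$ this makes $n_k$ grow like $2^{kd}$, exponentially in $k$. Any small-ball estimate that degrades polynomially in the probe dimension (your factor $n_k^{\beta N}$, or the $\sqrt{n}$-per-coordinate loss you invoke in the Banach case) then degrades exponentially in $k$, and the Borel--Cantelli sum $\sum_k k^{\gamma d}\,n_k^{\beta N}\,k^{-\gamma N}$ diverges for every $\gamma$: this is exactly the mechanism that confines the Hunt--Kaloshin box-counting argument to H\"older (not log-Lipschitz) inverses. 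What the paper actually uses --- and where homogeneity enters essentially, rather than as a mere covering-number bound --- is that a ball of radius $2^{-j}$ is covered by a number $M'=4^sM$ of balls of radius $2^{-(j+2)}$ that is bounded \emph{independently of $j$} (Lemmas \ref{Ujs} and \ref{Vjs}). This yields one subspace $V_j$ per dyadic layer with $\dim V_j\le M'$ uniformly in $j$, and the probe is the infinite weighted product $L_n=\sum_j j^{-s}\phi_{nj}$ with $\phi_{nj}$ drawn from the unit ball (or cube) of $V_j$. The cost of the scale-$j$ small-ball estimate is then the weight $j^{s}$ per coordinate, i.e.\ $\mu\{|(f+L)z|<\epsilon2^{-j}\}\le C\epsilon^Nj^{sN}$ (Lemma \ref{lem:mu_bound}), which is polynomial in $j$; the admissible $s$ is limited only by convergence of the defining series ($s>1$ in a Banach space via the triangle inequality, $s>1/2$ in a Hilbert space via orthogonality), and this is where the exponents $\alpha$ in \eqref{eq:logexp} actually originate. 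Your proposal, as written, never isolates this ``boundedly many new directions per scale'' consequence of homogeneity, and without it the bookkeeping cannot close for any $\gamma$.

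Your small-ball exponents are also inconsistent with known lower bounds. A Hilbert-space bound $C^N\omega_N(\delta/r)^N$ with no scale dependence at all ($\beta=0$) would, after the covering and Borel--Cantelli steps, give \eqref{gammabi} for every $\gamma>1/(N-d)$, contradicting the orthogonal-sequence example of Pinto de Moura \& Robinson showing that $\gamma>\tfrac{1}{2}$ is necessary; likewise $\beta=\tfrac{1}{2}$ in the Banach case would beat the necessary $\gamma>1$. The role of Ball's theorem is also not the one you assign it: in the Hilbert construction the lower bound available for $z$ in the $n$-th layer is on $\|\Pi_nz\|$, where $\Pi_n$ projects onto $\oplus_{j=1}^nV_j$, a space of dimension $D\le M'n$ that grows with the scale, so the relevant random variable is a linear functional of a uniform point in a cube of growing dimension $D$. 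Ball's bound $\sqrt2$ on hyperplane slices, uniform in $D$, is what prevents an additional dimension-dependent loss at that point; the surviving factor $n^{s}$ comes from normalising the weights $j^{-s}$, not from any failure of orthonormality. (Strictly speaking, Ball's theorem is only needed for the sharp exponents of Theorem \ref{thm:dAsharp}; Theorem \ref{thm:dA} as stated, with $\alpha=2$ and $\alpha=\tfrac{3}{2}$, predates it and follows from the same layered construction with cruder constants.)
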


Given this result, it is natural to ask whether or not the exponent $\gamma$ is sharp; here this is taken to mean whether the limiting value as $N\rightarrow\infty$, namely $\gamma>\alpha$, is optimal. Following an approach developed by Ben-Artzi et al.\ (1999), Pinto de Moura \& Robinson (2010a) showed that for the simple example of an orthogonal set whose norm decays exponentially one cannot improve on $\gamma>1$ in the Banach space case and $\gamma>\tfrac{1}{2}$ in the Hilbert space case. This paper closes the gap between these limits and the exponents in the statement of the theorem above, and shows that one can indeed take $\alpha=1$ in the Banach space case and $\alpha=\tfrac{1}{2}$ in the Hilbert space case.

Since one can isometrically embed any compact metric space $(X,d)$ into the Banach space $L^\infty(X)$ using the Kuratowski mapping $x\mapsto d(\cdot,x)$, the condition in Theorem \ref{thm:dA} for subsets of Banach spaces can be translated into a theorem for compact metric spaces, where one has to interpret the condition `$X-X$ is homogeneous' in terms of the image of $X$ in $L^\infty$ under the Kuratowski mapping. It would be interesting to obtain a more intrinsic characterisation of such metric spaces.

While the argument to obtain the optimal exponents in a general Banach space is relatively straightforward, the Hilbert space case relies on the following result due to Ball (1986) which guarantees that the maximum volume of hyperplane slices of the unit cube is $\sqrt2$, independent of the dimension. (Hensley, 1979, had previously showed that this upper bound is $\le 5$; Ball's argument is very similar, but he takes more care to refine the upper bound.)

\begin{theorem}[(Ball, 1986)]\label{ballthm}
Let $I_n=[-\tfrac{1}{2},\tfrac{1}{2}]^n\subset\R^n$ be the unit cube in $\R^n$, and let $S$ be a
co-dimension $1$ subspace of $\R^n$ with unit normal $a$. Then for any $r\in\R$
$$
|(S+ra)\cap I_n|\le\sqrt2.
$$
\end{theorem}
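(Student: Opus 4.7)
The plan is to reformulate the slice volume as a one-dimensional marginal density and attack it via Fourier inversion. Let $X=(X_1,\ldots,X_n)$ be uniformly distributed on $I_n$, so the $X_i$ are i.i.d.\ uniform on $[-\tfrac12,\tfrac12]$; then $f(r):=|(S+ra)\cap I_n|$ is exactly the density at $r$ of the scalar random variable $\<a,X\>=\sum_i a_iX_i$, whose characteristic function factorises as $\prod_{i=1}^n \sin(a_it/2)/(a_it/2)$ (coordinates with $a_i=0$ may be dropped, reducing the ambient dimension). Fourier inversion gives
$$
f(r)=\frac{1}{2\pi}\int_{-\infty}^{\infty}\e^{-\ri rt}\prod_{i=1}^n\frac{\sin(a_it/2)}{a_it/2}\,\d t,
$$
and since $|\e^{-\ri rt}|=1$ and $f$ is non-negative, the substitution $u=t/2$ reduces matters to bounding
$$
\frac{1}{\pi}\int_{-\infty}^{\infty}\prod_{i=1}^n\left|\frac{\sin(a_iu)}{a_iu}\right|\,\d u
$$
from above by $\sqrt2$.

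The natural way to exploit the constraint $\sum a_i^2=1$ is H\"older's inequality with exponents $p_i=1/a_i^2$, which satisfy $\sum_i 1/p_i=1$. After applying H\"older and rescaling each factor by $v=a_iu$, everything collapses onto the single one-dimensional estimate
$$
J(p):=\int_{-\infty}^{\infty}\left|\frac{\sin v}{v}\right|^p\,\d v\le\pi\sqrt{\frac{2}{p}}\qquad(p\ge 2).
$$
Granting $J(p_i)\le\pi|a_i|\sqrt2$, the $i$th H\"older factor is at most $(\pi\sqrt2)^{a_i^2}$, and the product telescopes via $\sum a_i^2=1$ to $\pi\sqrt2$, so division by $\pi$ yields $f(r)\le\sqrt2$. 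This step uses $p_i\ge 2$, i.e.\ $|a_i|\le 1/\sqrt2$ for every $i$; the only excluded case is that a single coordinate, say $|a_1|>1/\sqrt2$, exceeds the threshold. That case is handled directly: parametrising the slice over $(x_2,\ldots,x_n)$ via $x_1=(r-\sum_{i\ge 2}a_ix_i)/a_1$ produces a Jacobian $1/|a_1|$ multiplying an area of at most $1$, giving a slice volume below $\sqrt2$ with room to spare.

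The main obstacle is therefore the one-dimensional bound on $J(p)$. It is sharp at $p=2$, where $J(2)=\pi$ exactly matches $\pi\sqrt{2/2}$; and for large $p$ the integrand concentrates near $v=0$ where the expansion $\log|\sin v/v|=-v^2/6+O(v^4)$ gives $|\sin v/v|^p\lesssim\e^{-pv^2/6}$ and hence $J(p)\sim\sqrt{6\pi/p}$, of the right order. The delicate point is securing the constant $\sqrt2$ uniformly for all $p\ge 2$. Following Ball, I would split $J(p)$ at a carefully chosen cut-off, bound $|\sin v/v|^p$ pointwise by a Gaussian of explicit variance on the central region (using the fact that the quadratic coefficient of $\log|\sin v/v|$ at $0$ is $-\tfrac16$), and control the tail with the crude estimate $|\sin v/v|\le\min(1,1/|v|)$; tuning the cut-off produces the precise constant $\pi\sqrt{2/p}$.
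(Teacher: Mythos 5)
The paper does not actually prove this statement: Theorem \ref{ballthm} is quoted directly from Ball (1986) as an external ingredient, so your proposal has to be measured against Ball's original argument rather than anything in the text. Your reductions are exactly Ball's and are all sound: identifying the slice volume $|(S+ra)\cap I_n|$ with the density of $\sum_i a_iX_i$ for $X$ uniform on the cube, the Fourier inversion formula, the generalised H\"older inequality with exponents $p_i=1/a_i^2$, the rescaling $v=a_iu$ that telescopes the product to $\pi\sqrt2$ via $\sum_i a_i^2=1$, and the separate elementary treatment of the case $\max_i|a_i|>1/\sqrt2$ (where the requirement $p_i\ge2$ fails). Modulo the key integral inequality, this is a correct skeleton.

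The genuine gap is that the inequality $J(p)=\int_{-\infty}^\infty|\sin v/v|^p\,\d v\le\pi\sqrt{2/p}$ for $p\ge2$ --- which carries essentially all of the analytic content of the theorem --- is asserted rather than proved, and the strategy you sketch for it cannot succeed as described. Since $J(2)=\pi$ is an \emph{equality} case ($\pi\sqrt{2/2}=\pi$), any argument that replaces $|\sin v/v|^p$ by a strictly larger integrable majorant on a set of positive measure (a Gaussian $\e^{-pv^2/6}$ on a central region plus $|v|^{-p}$ on the tail) necessarily produces a bound that strictly exceeds $\pi\sqrt{2/p}$ at $p=2$, and hence, by continuity, on a whole interval $[2,2+\delta)$ --- no tuning of the cut-off can repair this. (Numerically, the Gaussian-plus-tail bound with cut-off at $\pi$ only closes for $p$ somewhere above $3$.) Ball's proof of this lemma is correspondingly more delicate: he establishes an estimate on the distribution function of $|\sin t/t|$ that is exact in the relevant regime, and this extra care is precisely what separates his constant $\sqrt2$ from Hensley's earlier constant $5$ obtained by cruder pointwise bounds. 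To complete your proof you must supply an argument of this sharper type, at least for $p$ in a neighbourhood of $2$, or reproduce Ball's integral lemma in full.
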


In terms of the motivation discussed above, despite the reduction of the logarithmic exponent in the Hilbert space case to any $\gamma>1/2$, there are two outstanding problems: first, the smoothness of the original vector field $\mathcal G$ on $X$, which is only known to be $1$-log-Lipschitz in general (Pinto de Moura \& Robinson, 2010b), rather than Lipschitz; and more tellingly, the lack of any techniques for bounding the Assouad dimension of invariant sets of infinite-dimensional dynamical systems (the strongest current results, see Pinto de Moura, Robinson, \& S\'anchez-Gabites, 2010, simply assume that $\dA(X-X)$ is finite).

\section{`Prevalence' and some preparatory lemmas}\label{prev}

The main theorem of this paper shows that there is a `prevalent' set of linear mappings from $\B$ into $\R^k$ with log-Lipschitz inverses. Prevalence provides an infinite-dimensional generalisation of the idea of `almost every'; it was in fact first used by Christensen (1973) in the study of the differentiability of Lipschitz mappings between infinite-dimensional spaces, a similar theory being developed later (but independently) by Hunt et al.\ (1992), mainly to deal with problems in the field of dynamical systems (see also Ott \& Yorke, 2005).

More formally, a subset $S$ of a normed linear space $V$ is prevalent if there exists a compactly supported probability measure $\mu$ on $V$ such that $\mu(v+S')=1$ for every $v\in V$, where $S'$ is a Borel set contained in $S$. The support of $\mu$ provides a `probe set' $E$ of allowable perturbations: given $v\in V$, $S$ is prevalent if $v+e\in S$ for $\mu$-almost every $e\in E$.
A major step in proving that any set $S$ is prevalent is the construction of an appropriate probability measure $\mu$ (and thus of $E$).

The construction of $E$ here, tailored to a particular set $X$, relies on the following lemma, whose proof can be found in Robinson (2009). [The statement of the lemma there is slightly different, but the proof of the result as stated here is identical.]

\begin{lemma}\label{Ujs}
Suppose that $Z$ is a compact homogeneous subset of $\B$. Then there exists an $M'>0$ and a sequence of
linear subspaces $\{V_j\}_{j=0}^\infty$ of $\B^*$ with $\dim V_j\le
M'$ for every $j$, such that for any $z\in Z$ with
$2^{-(n+1)}\le\|z\|\le 2^{-n}$, there exists an element $\psi\in
V_n$ such that
$$
\|\psi\|=1\qquad\mbox{and}\qquad|\psi(z)|\ge 2^{-(n+3)}.
$$
\end{lemma}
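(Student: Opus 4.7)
The plan is to cover each dyadic annulus
$$A_n := \{z \in Z : 2^{-(n+1)} \le \|z\| \le 2^{-n}\}$$
by a number of small balls that is controlled independently of $n$, to pick by Hahn--Banach a norm-one supporting functional at the centre of each, and to take $V_n$ to be the linear span of these functionals.

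Since $Z$ is $(M,s)$-homogeneous for some $M \ge 1$ and some $s \ge 0$, and since $A_n$ is contained in the ball of radius $2^{-n}$ about the origin, homogeneity yields a covering of $A_n$ by a number $K$ of balls of radius $2^{-(n+4)}$ with $K \le M \cdot 16^{s}$, independent of $n$. After the usual enlargement (discarding balls that miss $A_n$ and, for each remaining ball, replacing its centre by a point of $A_n$ in it and doubling the radius) one may assume the centres $z_1^{(n)},\ldots,z_K^{(n)}$ lie in $A_n$ and the radius is $2^{-(n+3)}$.

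For each centre $z_i^{(n)}$ the Hahn--Banach theorem provides $\psi_i^{(n)} \in \B^*$ with $\|\psi_i^{(n)}\| = 1$ and $\psi_i^{(n)}(z_i^{(n)}) = \|z_i^{(n)}\| \ge 2^{-(n+1)}$. Set $V_n := \mathrm{span}\{\psi_1^{(n)},\ldots,\psi_K^{(n)}\}$, so that $\dim V_n \le M' := M \cdot 16^{s}$ for every $n$. Given an arbitrary $z \in A_n$, choose a centre $z_i^{(n)}$ with $\|z - z_i^{(n)}\| \le 2^{-(n+3)}$ and take $\psi := \psi_i^{(n)}$; then
$$|\psi(z)| \ge \psi_i^{(n)}(z_i^{(n)}) - \|\psi_i^{(n)}\|\,\|z - z_i^{(n)}\| \ge 2^{-(n+1)} - 2^{-(n+3)} = 3\cdot 2^{-(n+3)} \ge 2^{-(n+3)},$$
which is the bound required.

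The only real obstacle is the book-keeping of tuning the covering radius so that the perturbation $|\psi(z) - \psi(z_i^{(n)})|$ is comfortably smaller than $\|z_i^{(n)}\|$, together with the minor subtlety that the definition of homogeneity allows arbitrary centres while here one wants centres in $A_n$ (resolved by the standard doubling trick above). The two substantive ingredients, dyadic homogeneity of $Z$ and Hahn--Banach on $\B$, are used in essentially the only natural way.
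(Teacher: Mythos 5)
Your proof is correct and is essentially the argument the paper intends (the paper defers the proof to Robinson (2009), but its proof of the Hilbert-space analogue, Lemma \ref{Vjs}, follows exactly this pattern): use homogeneity to cover the dyadic shell by a number of balls bounded independently of $n$, take norm-one Hahn--Banach support functionals at the (re-centred) centres, and let $V_n$ be their span. The constants in your version ($M'=M\cdot 16^s$ and the lower bound $3\cdot 2^{-(n+3)}$) differ harmlessly from the paper's, and the triangle-inequality step is carried out correctly.
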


%
%
%
%
%

In a Hilbert space it is more helpful to use the following result; note that the spaces
$V_j$ are now mutually orthogonal, but that the space $V_n$ alone is not sufficiently `rich' to obtain (\ref{Pinz}).

\begin{lemma}\label{Vjs}
Suppose that $Z$ is a compact homogeneous subset of a Hilbert space
$H$. Then there exists an $M'>0$ and a
sequence $\{V_j\}_{j=0}^\infty$ of mutually orthogonal linear subspaces
 of $H$, with $\dim V_j\le M'$ for every
$j$, such that for any $z\in Z$ with $2^{-(n+1)}\le\|z\|\le 2^{-n}$,
\be{Pinz}
\|\Pi_nz\|\ge 2^{-(n+2)},
\ee
where $\Pi_n$ is the orthogonal projection onto $\oplus_{j=1}^nV_j$.
\end{lemma}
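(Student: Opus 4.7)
The plan is to build the orthogonal subspaces $V_n$ inductively, using orthogonal projection to keep them mutually perpendicular while appealing to the homogeneity of $Z$ to bound $\dim V_n$. Assume $V_1,\ldots,V_{n-1}$ have been constructed; set $W_{n-1}=V_1\oplus\cdots\oplus V_{n-1}$ and let $Q_{n-1}$ denote the orthogonal projection of $H$ onto $W_{n-1}^\perp$ (taking $W_0=\{0\}$, $Q_0=I$). Consider the shell $A_n=\{z\in Z:2^{-(n+1)}\le\|z\|\le2^{-n}\}$, which lies in a ball of radius $2\cdot 2^{-n}$ about any point $z^\star\in A_n$. The homogeneity of $Z$ then ensures that a maximal $\rho$-separated subset $\{z_1,\ldots,z_K\}$ of $A_n$, taken with $\rho:=2^{-(n+5)}$, has cardinality $K\le M'$ for some constant $M'$ depending only on $(M,s)$ (independent of $n$). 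Define $V_n:=\mathrm{span}\{Q_{n-1}z_1,\ldots,Q_{n-1}z_K\}$; by construction $V_n\perp W_{n-1}$ and $\dim V_n\le M'$.

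To verify \eqref{Pinz}, fix $z\in A_n$ and split into cases. If $\|\Pi_{n-1}z\|\ge 2^{-(n+2)}$, then the mutual orthogonality of the $V_j$'s gives $\|\Pi_nz\|^2=\|\Pi_{n-1}z\|^2+\|P_{V_n}z\|^2\ge 2^{-2(n+2)}$, and we are done. Otherwise pick $z_i$ with $\|z-z_i\|\le\rho$; then $\|z_i\|\ge 2^{-(n+1)}$ (as $z_i\in A_n$) and $\|\Pi_{n-1}z_i\|\le 2^{-(n+2)}+\rho$, so Pythagoras yields
$$\|Q_{n-1}z_i\|^2\ge\|z_i\|^2-\|\Pi_{n-1}z_i\|^2\ge 3\cdot 2^{-2(n+2)}-6\cdot 2^{-(n+2)}\rho-\rho^2,$$
which with $\rho=2^{-(n+5)}$ gives $\|Q_{n-1}z_i\|\ge\tfrac{3}{2}\cdot 2^{-(n+2)}$. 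Projecting $z$ onto the unit vector $Q_{n-1}z_i/\|Q_{n-1}z_i\|\in V_n$ and applying Cauchy--Schwarz together with $\|Q_{n-1}(z-z_i)\|\le\rho$ yields
$$\|P_{V_n}z\|\ge\frac{|\langle z,Q_{n-1}z_i\rangle|}{\|Q_{n-1}z_i\|}\ge\|Q_{n-1}z_i\|-\rho\ge 2^{-(n+2)},$$
and hence $\|\Pi_nz\|\ge\|P_{V_n}z\|\ge 2^{-(n+2)}$, as required.

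The only real obstacle is the arithmetic bookkeeping in the second case: the scale $\rho$ must be small enough (relative to $2^{-(n+2)}$) for the Pythagorean identity to leave a definite surplus $\|Q_{n-1}z_i\|-\rho\ge 2^{-(n+2)}$, and simultaneously large enough (relative to $2^{-n}$) that homogeneity keeps $K$ bounded. The essential conceptual point is that the Hilbert structure permits one to subtract the prior subspace $W_{n-1}$ cleanly via orthogonal projection, converting the cumulative estimate into a sum of squares of orthogonal components; this is exactly the feature that is unavailable in Lemma~\ref{Ujs} for general Banach spaces, and it is what will later permit the sharper exponent $\alpha=\tfrac12$ instead of $\alpha=1$.
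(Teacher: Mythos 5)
Your proof is correct, and the underlying construction is the same as the paper's: take a finite net of the dyadic shell $Z_n$ (of cardinality at most $M'$ by homogeneity), use it to generate a subspace of dimension at most $M'$, and orthogonalise across scales. The difference lies in the verification, where you take a harder road than necessary. The paper spans the \emph{unprojected} net centres $u_i^{(n)}$ to form $U_n$, observes that the orthogonal projection $P_n$ onto $U_n$ satisfies $\|z-P_nz\|=\mathrm{dist}(z,U_n)\le\|z-u_i^{(n)}\|\le 2^{-(n+2)}$, whence $\|P_nz\|\ge\|z\|-2^{-(n+2)}\ge 2^{-(n+2)}$, and then defines the $V_j$ so that $\oplus_{j=1}^nV_j=\oplus_{j=1}^nU_j$; since $U_n$ is contained in that sum, $\|\Pi_nz\|\ge\|P_nz\|$ and the estimate is immediate --- no case split, no Pythagoras, and a net of radius $2^{-(n+2)}$ rather than $2^{-(n+5)}$ suffices. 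Because you project the net points onto $W_{n-1}^\perp$ \emph{before} spanning, you are forced into the two-case analysis (the projected centres may be short, or poorly aligned with $z$, when $z$ lies mostly inside $W_{n-1}$), and that is where all your bookkeeping comes from. One minor arithmetic point: the cross term in your Pythagoras display should be $2\cdot 2^{-(n+2)}\rho$ rather than $6\cdot 2^{-(n+2)}\rho$; as written the right-hand side is slightly less than $\tfrac{9}{4}\cdot 2^{-2(n+2)}$, so it does not quite yield $\|Q_{n-1}z_i\|\ge\tfrac{3}{2}\cdot 2^{-(n+2)}$ as claimed. This is harmless, since all your final step actually requires is $\|Q_{n-1}z_i\|\ge 2^{-(n+2)}+\rho$, which holds comfortably with either constant.
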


\begin{proof}
Write
$$
Z_j=\{z\in Z:2^{-(j+1)}\le \|z\|\le 2^{-j}\}.
$$
Since $Z_j\subset B(0,2^{-j})$ it can be covered by $M_j$ balls
of radius $2^{-(j+2)}$, with centres $\{u^{(j)}_i\}_{i=1}^{M_j}$, where
$$ M_j=N(2^{-j},2^{-(j+2)})\le
    4^sM=M'.
$$
Let $U_j$ be the space spanned by $\{u_i^{(j)}\}_{i=1}^{M_j}$; clearly ${\rm dim}(U_j)\le M'$, and if $P_j$ denotes the projection onto $U_j$,
$$
\|P_jz\|\ge\|z\|-\|z-P_jz\|\ge 2^{-(n+1)}-2^{-(n+2)}=2^{-(n+2)}.
$$
Finally, define mutually orthogonal subspaces $V_j$ such that
$$
\bigoplus_{j=1}^n V_j=\bigoplus_{j=1}^n U_j.
$$
and the result follows since $\|\Pi_nz\|\ge\|P_nz\|$.
\end{proof}

The spaces whose existence is guaranteed by these two Lemmas with $Z=X-X$ form the basis of the construction of the `probe space' with respect to which it will be shown that linear embeddings with log-Lipschitz inverses are prevalent.

In the case of a general Banach space, take $S_j$ to be the unit ball in $V_j$ (the spaces constructed in Lemma \ref{Ujs}, which, note, are subsets of $\B^*$), choose $s>1$, and let $E$ be the collection of all maps $L:\B\rightarrow\R^N$ given by
\begin{equation}\label{Egammadef}
E=\left\{L=(L_1,\ldots,L_N):\ L_n=\sum_{j=1}^\infty
j^{-s}\phi_{nj},\ \phi_{nj}\in S_j\right\}.
\end{equation}
The choice $s>1$ guarantees (via the triangle inequality) that the expression for each $L_n$ converges. To define a measure on $E$, the first step is to define a
measure on $S_j$ for each $j$. To do this, first choose a basis for
$V_j$; then by means of the coordinate representation with respect
to this basis one can identify $S_j$ with a symmetric convex set
$U_j\subset\R^{d_j}$. The uniform probably measure on
$U_j$ induces a probability measure $\lambda_j$ on $S_j$. Finally,
define the measure $\mu$ on $E$ to be
that in which each $\phi_{nj}$ is chosen independently and at random
according the the distribution $\lambda_j$.

If $\B$ is in fact a Hilbert space, instead of a `unit ball' $S_j$ take a `unit cube' $C_j$ in $V_j$: choose an orthonormal basis $\{e_j^{(i)}\}_{i=1}^{d_j}$ for $V_j$, and let
\be{ucube}
C_j=\{u\in V_j:|(u,e_j)|\le\tfrac{1}{2}\}.
\ee
  Let $s>1/2$, and let $E$ be the collection of all maps $L:\B\rightarrow\R^k$ given by
\begin{equation}\label{Egammadef}
E=\left\{L=(L_1,\ldots,L_N):\ L_n=\left[\sum_{j=1}^\infty
j^{-s}\phi_{nj}\right]^*,\ \phi_{nj}\in C_j\right\},
\end{equation}
where for $x\in H$, $x^*$ denotes the linear map $u\mapsto (x,u)$.
Since the spaces $V_j$ are mutually orthogonal,
$$
\left\|\sum_{j=1}^\infty
j^{-s}\phi_{nj}\right\|^2=\sum_{j=1}^\infty
j^{-2s}\|\phi_{nj}\|^2\le\sum_{j=1}^\infty j^{-2s},
$$
and the condition $s>1/2$ is now sufficient to ensure that the expression for
$L_n$ converges. We can define a measure $\mu$ on $E$ simply by letting $\phi_{nj}$ be uniformly distributed over $C_j\simeq I_{d_j}$, where $I_d=[-\tfrac{1}{2},\tfrac{1}{2}]^d$ is the unit cube in $\R^d$.\medskip

The following bound is central to the proof, and key to the improvement in the exponent of the logarithmic term. In the Banach space case it follows from an argument due to Hunt \& Kaloshin (1999; Lemma 3.10); in the Hilbert space case the argument is much more delicate (since the orthogonal spaces used in the construction of $E$ are `smaller' than the spaces used in the Banach space case), and relies on Ball's result about hyperplane slices of products of the unit cube. In the statement of the lemma, ${\mathscr L}(\B,\R^N)$ denotes the space of all bounded linear maps from $\B$ into $\R^N$.

\begin{lemma}\label{lem:mu_bound}If $z\in Z$ with $2^{-(j+1)}\le\|z\|\le 2^{-j}$ then for any $f\in{\mathscr L}(\B,\R^N)$,
\begin{equation}\label{eq:key}
    \mu\{\, L\in E :\ |(f+L)z|<\epsilon 2^{-j}\,\}
    \le C\epsilon^N j^{s N},
\end{equation}
where $C=C(N)$.
\end{lemma}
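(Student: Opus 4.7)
The plan is to decouple the coordinates and reduce to a one-dimensional density estimate. Since $L_1,\dots,L_N$ are independent under $\mu$, and $|(f+L)z|<\epsilon 2^{-j}$ forces $|(f_n+L_n)z|<\epsilon 2^{-j}$ for each $n$,
$$
\mu\{L\in E:|(f+L)z|<\epsilon 2^{-j}\}\le\prod_{n=1}^N\mu_n\{|(f_n+L_n)z|<\epsilon 2^{-j}\},
$$
so it suffices to prove, uniformly in the constant $a=f_n(z)\in\R$, the single-coordinate bound
$\mu_n\{|a+L_n(z)|<\epsilon 2^{-j}\}\le C_1\epsilon\, j^s$, and take the product.

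In the Banach case I would follow the technique of Hunt and Kaloshin (Lemma 3.10): conditioning on every $\phi_{nk}$ with $k\ne j$ collapses those contributions into the additive constant, leaving only the randomness of $j^{-s}\phi_{nj}$. Lemma~\ref{Ujs} supplies $\psi\in V_j$ with $\|\psi\|=1$ and $|\psi(z)|\ge 2^{-(j+3)}$, so the linear functional $\phi\mapsto\phi(z)$ on the symmetric convex body $S_j\subset V_j$ (with $\dim V_j\le M'$) attains at least $2^{-(j+3)}$ in some direction. A standard Brunn-type slicing estimate for a symmetric convex body in $\R^{d_j}$ then controls the one-dimensional density of $\phi_{nj}(z)$ by a constant (depending only on $M'$) times $2^{j}$, which after the factor $j^{-s}$ and integration over an interval of length $2\epsilon 2^{-j}$ yields the required $O(\epsilon j^s)$ bound.

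The Hilbert case is the delicate one and is where Ball's theorem enters decisively. Expanding each $\phi_{nk}\in C_k$ in its orthonormal basis as $\phi_{nk}=\sum_i\alpha_{nki}e_k^{(i)}$, the $\alpha_{nki}$ are i.i.d.\ uniform on $[-\tfrac12,\tfrac12]$, and
$$
L_n(z)=\sum_k k^{-s}(\phi_{nk},z)=\sum_{k,i}c_{ki}\alpha_{nki},\qquad c_{ki}=k^{-s}(e_k^{(i)},z).
$$
Mutual orthogonality of the $V_k$ together with Lemma~\ref{Vjs} gives
$$
\|c\|_2^2\ge\sum_{k=1}^j k^{-2s}\sum_i(e_k^{(i)},z)^2\ge j^{-2s}\,\|\Pi_jz\|^2\ge j^{-2s}\,2^{-2(j+2)}.
$$
Conditioning on $\{\phi_{nk}:k>j\}$ absorbs their contribution into a single constant $\tilde a$ and reduces matters to bounding the measure of a slab $\{\alpha\in I_d:|\tilde a+c\cdot\alpha|<\epsilon 2^{-j}\}$ under the uniform measure on $I_d=[-\tfrac12,\tfrac12]^d$, where $d=\sum_{k\le j}\dim V_k$. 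The density of $c\cdot\alpha$ at any point equals $\|c\|_2^{-1}$ times the $(d-1)$-dimensional volume of the hyperplane slice of $I_d$ with unit normal $c/\|c\|_2$; by Theorem~\ref{ballthm} that slice has volume at most $\sqrt2$ \emph{independently of $d$}, so the density is at most $\sqrt2/\|c\|_2$, the slab has measure at most $2\sqrt 2\,\epsilon 2^{-j}/\|c\|_2\le 8\sqrt2\,\epsilon\,j^s$, and taking the product over $n$ yields (\ref{eq:key}) with $C=(8\sqrt2)^N$. The crux is precisely this density bound: any $d$-dependent slicing estimate would force $C$ to grow with $\sum_{k\le j}\dim V_k$, and only Ball's dimension-free bound $\sqrt 2$ allows $C$ to be absorbed into $C(N)$ as claimed.
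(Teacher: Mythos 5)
Your proposal is correct and follows essentially the same route as the paper: factor over the $N$ coordinates, condition on the tail of the random series so that only the first $j$ blocks matter, use Lemma \ref{Ujs} (via the Hunt--Kaloshin slicing estimate for a symmetric convex body) in the Banach case and Lemma \ref{Vjs} together with Ball's dimension-free bound $\sqrt2$ on cube slices in the Hilbert case, and then integrate the resulting density over an interval of length $2\epsilon 2^{-j}$. The only difference is cosmetic — you spell out the Brunn-type argument behind Hunt and Kaloshin's lemma where the paper simply cites it — so there is nothing to correct.
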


\begin{proof}
  In the Banach space case, Hunt \& Kaloshin show that for any $z\in\B$ and any $\psi\in S_j$ with $\|\psi\|_*=1$,
  $$
  \mu\{L\in E:\ |(f+L)z|<\epsilon\}\le \left(j^sd_j\epsilon|\psi(z)|^{-1}\right)^N.
  $$
  (For the result in precisely this form see Lemma 6.10 in Robinson, 2010.)
  In the case considered here, $d_j={\rm dim}(V_j)\le M'$, and there exists a $\psi\in S_j$ with $\|\psi\|=1$ such that $\psi(z)\ge 2^{-(j+3)}$ using Lemma \ref{Ujs}, from which (\ref{eq:key}) follows immediately. 
  
  In the Hilbert space case, observe that
\begin{align*}
&\mu\{L\in E:\ |(f+L)(x)|<\epsilon\}\\
&\quad\le\mu\{L=(l_1,\ldots,l_k)\in E:\ |(f_n+l_n)(x)|<\epsilon\ \mbox{for each }n=1,\ldots,k\}\\
&\quad=\prod_{n=1}^k\mu_0\{l\in E_0:\ |(f_n+l)(x)|<\epsilon\}.
\end{align*}
Take an $f_0\in H^*$ and consider
\begin{align*}
&\left[\bigotimes_{i=1}^\infty\lambda_i\right]\left\{\{\phi_i\}_{i=1}^\infty\in E_0:\ |f_0(x)+\sum_{i=1}^\infty i^{-\gamma}(\phi_i,x)|<\epsilon\right\}\\
&\qquad=\left[\bigotimes_{i=1}^\infty\lambda_i\right]\left\{\{\phi_i\}_{i=1}^\infty\in E_0:\ |[f_0(x)+\sum_{i\neq j}^\infty i^{-\gamma}(\phi_i,x)]+j^{-\gamma}(\phi_j,x)|<\epsilon\right\}.
\end{align*}
 It will soon be shown that for $\alpha=f_0(x)+\sum_{i\ge n+1}^\infty i^{-\gamma}(\phi_i,x)$ fixed, the bound on
$$
\lambda_j\{\phi\in S_j:\ |\alpha+\sum_{j=1}^nj^{-\gamma}(\phi_j,x)|<\epsilon\}
$$
is independent of $\alpha$. It follows from the product structure of the measure $\otimes_{j=1}^\infty\lambda_j$ that the above expression is bounded by 
$$
\left[\bigotimes_{j=1}^n\lambda_j\right]\{(\phi_1,\ldots,\phi_n)\in\prod_{j=1}^nC_j:\ |\sum_{j=1}^nj^{-\gamma}\phi_j^*(\Pi_jx)|<\epsilon\}.
$$
The estimate now depends on an entirely finite-dimensional problem. Indeed, each $V_j\simeq\R^{d_j}$, and $C_j$ (the `unit cube' in $V_j$) is isomorphic to $I_{d_j}$. Set $D=\sum_{j=1}^nd_j$. The vector $(P_1x,\ldots,P_nx)$ corresponds to a vector $a=(a_1,\ldots,a_n)\in\R^D$; if we set
$$
a'=(a_1,2^{-s}a_2,\cdots,n^{-s}a_n)\qqand \hat a=a'/|a'|
$$
and let $\mu$ denote the uniform probability measure on $I_D$ (i.e.\ Lebesgue measure), the problem is to bound, for any $y\in\R$,
\begin{align*}
\mu\{x\in I_D:\ |y+(x\cdot a')|\le\epsilon\}&=\frac{1}{|a'|}\,\mu\{x\in I_D:\
|y+(x\cdot\hat a)|\le\epsilon\}\\
&\le \frac{n^s}{|a|}\,\mu\{x\in I_D:\
|y+(x\cdot\hat a)|\le\epsilon\}.
\end{align*}
where $\hat a=a'/|a'|$. The result is now a consequence of Theorem \ref{ballthm}, since
$$
\mu\{x\in I_D:\ |y+(x\cdot\hat a)|\le\epsilon\}\le 2\epsilon|(S_{\hat
a}-y\hat a)\cap I_D|\le 2\epsilon\sqrt 2
$$
where $S_{\hat a}$ is the hyperplane through the origin with normal $\hat a$.\end{proof}

\section{Log-Lipschitz embeddings with sharp exponent}

Armed with Lemma \ref{lem:mu_bound} the argument that gives the sharp exponents is relatively straightforward. The short proof, reproduced here in order to make this paper self-contained, is taken from Robinson (2009).

\begin{theorem}\label{thm:dAsharp}
In Theorem \ref{thm:dA} one can take $\alpha=1$ if $\B$ is a Banach space and $\alpha=\tfrac{1}{2}$ if $\B$ is in fact a Hilbert space.
\end{theorem}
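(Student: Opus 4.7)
The plan is to combine the single-point measure estimate of Lemma \ref{lem:mu_bound} with a multiscale covering of $Z = X - X$ and a Borel--Cantelli summation, in the spirit of Hunt \& Kaloshin (1999). Fix any $v \in {\mathscr L}(\B,\R^N)$; by the definition of prevalence it suffices to show that $\mu$-a.e.\ $L \in E$ makes $v + L$ log-Lipschitz on $X$. The upper bound in (\ref{gammabi}) is automatic, since every $L$ in the support of $\mu$ satisfies $\|L\| \le C_E$ for a constant $C_E$ depending only on $M'$ and $s$ (using $s>1$ in the Banach case and $s>1/2$ in the Hilbert case to make $\sum j^{-s}$ respectively $(\sum j^{-2s})^{1/2}$ finite); hence the Lipschitz constant $c_L := \|v\| + C_E$ is uniform on ${\rm supp}\,\mu$.

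For each $j\ge 0$ let $Z_j = \{z\in Z:\ 2^{-(j+1)}\le\|z\|\le 2^{-j}\}$, fix $\gamma > (sN+1)/(N-d)$, and choose the resolution scale $k(j) = j + \lceil \gamma\log_2 j + C_0 \rceil$ with $C_0$ chosen so that $c_L 2^{-k(j)} \le \tfrac{1}{2}\cdot 2^{-j}/j^\gamma$. Since $\dA(Z)<d$, a maximal $2^{-k(j)}$-separated subset of $Z_j$ provides at most $M 2^{(k(j)-j)d} \le C_1 j^{\gamma d}$ centres $w_{j,i}\in Z_j$ whose $2^{-k(j)}$-balls cover $Z_j$. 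Applying Lemma \ref{lem:mu_bound} with $f = v$ and $\epsilon = c_1/j^\gamma$ at each centre, and summing over centres and over scales,
$$
\sum_{j\ge 0}\sum_{i} \mu\{L\in E:\ |(v+L)w_{j,i}| < c_1 2^{-j}/j^\gamma\} \le C\sum_{j\ge 0} j^{\gamma d + sN - \gamma N},
$$
which is finite precisely when $\gamma(N-d) > sN + 1$.

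The Borel--Cantelli lemma then supplies, for $\mu$-a.e.\ $L$, a level $j_0(L)$ beyond which $|(v+L)w_{j,i}| \ge c_1 2^{-j}/j^\gamma$ at every centre. For any $z \in Z_j$ with $j \ge j_0$, picking the nearest centre gives
$$
|(v+L)z| \ge |(v+L)w_{j,i}| - c_L\|z - w_{j,i}\| \ge \tfrac{1}{2}c_1 2^{-j}/j^\gamma,
$$
which, applied to $z = x - y$ with $\|x - y\| < \rho_L := 2^{-j_0}$, is equivalent to (\ref{gammabi}). The sharp exponents follow because $s$ can be chosen arbitrarily close to $1$ in the Banach case and to $\tfrac{1}{2}$ in the Hilbert case, so the threshold $(sN + 1)/(N-d)$ collapses to the asserted $(\alpha N + 1)/(N-d)$ with $\alpha = 1$ and $\alpha = \tfrac{1}{2}$ respectively. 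In fact the entire substantive content sits inside Lemma \ref{lem:mu_bound}: the covering and Borel--Cantelli step is routine bookkeeping, and the only delicate point is the Hilbert-space halving of $s$, which is purchased precisely by Ball's slicing theorem acting on the orthogonal probe subspaces $V_j$.
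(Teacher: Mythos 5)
Your proposal is correct and follows essentially the same route as the paper: choose $s$ slightly above $\alpha$ so that $\gamma>(sN+1)/(N-d)$, cover each dyadic layer $Z_j$ of $X-X$ by $O(j^{\gamma d})$ balls of radius comparable to $j^{-\gamma}2^{-j}$ using homogeneity, apply Lemma \ref{lem:mu_bound} at each centre, transfer to arbitrary points via the uniform Lipschitz bound on $f+L$, and conclude by Borel--Cantelli. The only differences are cosmetic (an explicit resolution scale $k(j)$ and a maximal separated set instead of a direct cover, and absorbing the Lipschitz loss into the covering radius rather than into a $(1+K)$ factor in the measure estimate), and your closing observation that all the substance lives in Lemma \ref{lem:mu_bound} matches the paper's own framing.
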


\begin{proof}
Choose $s>\alpha$ small enough to
ensure that
\begin{equation}\label{zetachoose}
\gamma>\frac{sN+1}{N-d}
\end{equation}
and define the probe set $E$ following the construction outlined above using this value of $s$.

Define a sequence of layers of
$X-X$,
\begin{equation}\label{Zlayers}
Z_j=\{z\in X-X:\ 2^{-(j+1)}\le\|z\|\le 2^{-j}\}
\end{equation}
and for a given $f\in{\mathscr L}(\B,\R^N)$, let $Q_j$ be the corresponding set of maps that fail to satisfy the almost
bi-Lipschitz property for some $z\in Z_j$,
$$
Q_{j}=\{\,L\in Q:\ |(f+L)z|
        \le j^{-\gamma}2^{-j}
    \ \mbox{ for some }z\in Z_j\,\}.
$$
For every $L\in Q$, the map $f+L$ is Lipschitz, with Lipschitz constant no larger than $K$.

By assumption $\dA(X-X)<d$, and so $Z_j\subset B(0,2^{-j})$ can be
covered by $M_j\le Mj^{\gamma d}$ balls of radius
$j^{-\gamma}2^{-j}$, whose centres $z_i^{(j)}$ ($i=1,\ldots,M_j$) lie in $Z_j$. Given any $z\in Z_j$ there is
$z^{(j)}_i$ such that $\|z-z^{(j)}_i\|\le j^{-\gamma}2^{-(j+2)}$, and thus
\begin{align*}
    |(f+L)z|&\ge |(f+L)z^{(j)}_i|
            -|(f+L)(z-z^{(j)}_i)|\\
        &\ge |(f+L)(z^{(j)}_i)|
            -Kj^{-\gamma}2^{-j}
\end{align*}
which implies, using
%
Lemma \ref{lem:mu_bound}, that
\begin{align*}
    \mu(Q_{j})&\le \sum_{i=1}^{M_j}
        \mu \{\,L\in Q:\ |(f+L)z_i^{(j)}|
        \le (1+K)j^{-\gamma}2^{-j}\,\}\\
        &\le M_j C(1+K)^Nj^{-\gamma N}j^{s N}\\
        &\le C'j^{\gamma d-N(\gamma-s)},
\end{align*}
since $M_j\le Mj^{\gamma d}$. The condition
(\ref{zetachoose}) implies that $\gamma d+N(s-\gamma)<-1$, and so
$$\sum_{j=1}^\infty\mu(Q_{j})< \infty.$$
Using the Borel--Cantelli Lemma, $\mu$-almost every $L$ is contained
in only a finite number of the $Q_j$: thus for $\mu$-almost every
$L$ there exists a $j_L$ such that for all $j\ge j_L$,
$$
2^{-(j+1)}\le\|z\|\le2^{-j}\quad\Rightarrow\quad |Lz|\ge
j^{-\gamma}2^{-j},
$$
so for $\|z\|\le 2^{-j_L}$,
\begin{equation}\label{almostconclusion}
|Lz|\ge 2^{-(1+\gamma)}\frac{\|z\|}{|\,\log\|z\|\,|^\gamma}.
\end{equation}
\end{proof}

%
%

\begin{acknowledgements}\label{ackref}
I would particularly like to Eleonora Pinto de Moura for many interesting conversations about this problem, and Eric Olson with whom the groundwork was laid.
\end{acknowledgements}

\affiliationone{
   James C.\ Robinson\\
   Mathematics Institute, University of Warwick, Coventry CV4 7AL.\\
   U.K.
   \email{j.c.robinson@warwick.ac.uk}}%
\end{document}